\newcommand{\eee}{{\rm e}}  
\newcommand{\F}{\mathbb F}
\newcommand{\R}{\mathbb R}
\newcommand{\N}{\mathbb N}
\newcommand{\C}{\mathbb C}
\newcommand{\E}{\mathbb E}
\newcommand{\Pro}{\mathbb P}
\newcommand{\Sc}{\mathcal{S}}
\newcommand{\vol}{\mathrm{vol}}
\newcommand{\Mat}{\mathrm{Mat}}
\def\dint{\textup{d}}
\newcommand{\eps}{\varepsilon}
\newcommand{\toas}{\overset{{a.s.}}{\underset{n\to\infty}\longrightarrow}}
\newtheorem{thm}{Theorem} 
\newtheorem{lemma}[thm]{Lemma}
\newtheorem{proposition}[thm]{Proposition}
\theoremstyle{remark}
{
\newtheorem{rmk}[thm]{Remark}

}
\begin{document}


\title[]{Exact asymptotic volume and volume ratio\\ of Schatten unit balls}

\author[Z. Kabluchko]{Zakhar Kabluchko}
\address{Zakhar Kabluchko: Institut f\"ur Mathematische Stochastik, Westf\"alische Wilhelms-Uni\-ver\-sit\"at M\"unster, Germany}
\email{zakhar.kabluchko@uni-muenster.de}

\author[J. Prochno]{Joscha Prochno}
\address{Joscha Prochno: School of Mathematics \& Physical Sciences, University of Hull, United Kingdom} \email{j.prochno@hull.ac.uk}

\author[C. Th\"ale]{Christoph Th\"ale}
\address{Christoph Th\"ale: Fakult\"at f\"ur Mathematik, Ruhr-Universit\"at Bochum, Germany} \email{christoph.thaele@rub.de}

\keywords{Asymptotic geometric analysis, convex bodies in high dimensions, logarithmic potential theory with external field, Schatten classes, Schatten balls, Ullman distribution, volume, volume ratio}
\subjclass[2010]{Primary: 52A23. Secondary: 46B06, 60B20, 46B07, 47B10, 52A21, 31A15.}



\begin{abstract}
The unit ball $B_p^n(\R)$ of the finite-dimensional Schatten trace class $\mathcal S_p^n$ consists of all real $n\times n$ matrices $A$ whose singular values $s_1(A),\ldots,s_n(A)$ satisfy $s_1^p(A)+\ldots+s_n^p(A)\leq 1$, where $p>0$. Saint Raymond [Studia Math.\ 80, 63--75, 1984] showed that the limit
$$
\lim_{n\to\infty} n^{1/2 + 1/p} \big(\text{Vol}\, B_p^n(\R)\big)^{1/n^2}
$$
exists in $(0,\infty)$ and provided both lower and upper bounds.  In this paper we determine the precise limiting constant based on ideas from the theory of logarithmic potentials with external fields. A similar result is obtained for complex Schatten balls. As an application we compute the precise asymptotic volume ratio of the Schatten $p$-balls, as $n\to\infty$, thereby extending Saint Raymond's estimate in the case of the nuclear norm ($p=1$) to the full regime $1\leq p \leq \infty$ with exact limiting behavior.
\end{abstract}

\maketitle


\section{Introduction and main results}
\subsection{Introduction}
The Schatten trace classes $\mathcal S_p$ ($0< p \leq \infty$), consisting of all compact linear operators on a Hilbert space for which the sequence of their singular values belongs to the sequence space $\ell_p$, are one of the most important classes of unitary operator ideals. Their analysis, particularly in the finite-dimensional setting, has a long tradition in asymptotic geometric analysis and the local theory of Banach spaces. For example, Gordon and Lewis \cite{GL1974} obtained that the space $\mathcal S_1$ does not have local unconditional  structure, Tomzcak-Jaegermann \cite{TJ1974} demonstrated that this space (which is naturally identified with the projective tensor product $\ell_2\otimes_{\pi}\ell_2$) has Rademacher cotype $2$, and K\"onig, Meyer and Pajor \cite{KoenigMeyerPajor} proved the boundedness of the isotropic constants of $\mathcal S_p^n$ ($1\leq p \leq \infty$). More recently, Gu\'edon and Paouris \cite{GuedonPaourisConcentration} have established concentration of mass properties for the unit balls of Schatten $p$-classes $\mathcal S_p$, Barthe and Cordero-Erausquin \cite{BartheCordero-Erausquin} studied variance estimates, Radke and Vritsiou \cite{RadkeVritsiou} proved the thin-shell conjecture, and Hinrichs, Prochno and Vyb\'iral \cite{HPV17} computed the entropy numbers for their natural embeddings.

\subsection{Asymptotic volume of Schatten balls}
In \cite{R1984}, Saint Raymond studied the volumetric properties of unit balls in finite-dimensional real and complex Schatten $p$-classes. For $0<p\le \infty$, let $\mathcal S_p^n(\F)$ denote the space of all $n \times n$ matrices $A$ with entries from $\F\in\{\R,\C\}$ equipped with the Schatten $p$-(quasi)norm
\[
\|A\|_{\Sc^n_p}
=\begin{cases}
\bigg(\sum\limits_{j=1}^n s_j(A)^p\bigg)^{1/p} &: p<\infty\\
\max\big\{s_1(A),\ldots,s_n(A)\big\} &: p=\infty,
\end{cases}
\]
where $s_1(A),\ldots,s_n(A)$ are the singular values of $A$. If we denote by
$$
B_p^n(\F) = \{A\in\mathcal{S}_p^n:\|A\|_{\mathcal{S}_p^n}\leq 1\}
$$
the corresponding Schatten unit ball, Saint Raymond proved asymptotic formulas for their volume, showing that, as $n\to\infty$,
\begin{align}\label{eq:RaymondReal}
\big(\text{Vol}_{n^2}\, B_p^n(\R)\big)^{1/n^2} &\sim n^{-\frac 12 -\frac 1p} \sqrt{2\pi \eee^{3/2} \Delta(p/2)}
\end{align}
and
\begin{align}\label{eq:RaymondComplex}
\big(\text{Vol}_{2n^2}\, B_p^n(\C)\big)^{1/(2n^2)} &\sim n^{-\frac 12 -\frac 1p} \sqrt{\pi \eee^{3/2} \Delta(p/2)}\,,
\end{align}
where $\Delta(p)\in(0,\infty)$ is certain constant and $\text{Vol}_N$ stands for the Lebesgue measure of dimension $N\in\N$. Here and below we shall write $a_n\sim b_n$ for two sequences $(a_n)_{n\in\N}$ and $(b_n)_{n\in\N}$ whenever $a_n/b_n\to 1$, as $n\to\infty$. For the parameter $\Delta(p)$ that appears in \eqref{eq:RaymondReal} and \eqref{eq:RaymondComplex} he provided both lower and upper bounds. However, with the exception of $\Delta(1)=\eee^{-1/2}$ and $\Delta(\infty)=1/4$ no explicit values of $\Delta(p)$ seem to be known.
With this paper we want to shed light on the precise value of $\Delta(p)$ and the asymptotic volume of the unit balls in finite-dimensional Schatten $p$-classes for all $0<p\leq \infty$. Concomitantly, we shall study another important quantity related to the geometry of Banach spaces, the (asymptotic) volume ratio of $\mathcal S_p^n(\F)$; see below. Our main result is the explicit computation of $\Delta(p)$.

\begin{thm}\label{theo:main_Delta_p}
For all $0<p\leq \infty$  we have
$$
\Delta(p)= \frac 14 \left(\frac{2\sqrt \pi\, \Gamma(p+1)}{\sqrt \eee\, \Gamma(p+\frac 12)}\right)^{1/p}.
$$
\end{thm}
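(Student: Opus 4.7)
The plan is to recast $\Delta(p)$ as the exponential of a constrained weighted logarithmic energy and then evaluate that extremum in closed form via the Ullman equilibrium measure. The first move is the classical Weyl integration formula for the singular value decomposition of real (and complex) matrices, which gives
\[
\text{Vol}_{n^2}\, B_p^n(\R) \;=\; c_n^{\R} \int_{\{s_i\ge 0,\,\sum_i s_i^p\le 1\}} \prod_{i<j}|s_i^2-s_j^2|\,\dif s_1\cdots \dif s_n
\]
(the analogous formula holds with Vandermonde exponent $2$ in the complex case), where the prefactor $c_n^{\F}$ is an explicit product of orthogonal/unitary group volumes. Rescaling $s_i = n^{-1/p}t_i$ turns the constraint into $\frac{1}{n}\sum t_i^p\le 1$ and extracts the factor $n^{-1/p}$; Stirling applied to $c_n^{\F}$ and $n!$ accounts for the remaining $n^{-1/2}$ in Saint Raymond's rate.

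Writing $\mu_n=\frac{1}{n}\sum\delta_{t_i}$ for the empirical measure, the rescaled integrand is $\exp\bigl(\tfrac{\beta}{2}n^2 E(\mu_n)+o(n^2)\bigr)$ with $\beta\in\{1,2\}$ distinguishing the real and complex case and
\[
E(\mu)\;=\;\int\!\!\int \log|s^2-t^2|\,\dif\mu(s)\dif\mu(t).
\]
A Laplace/large-deviation argument in the spirit of Ben Arous--Guionnet and Hiai--Petz identifies $\log\Delta(p)$ with an explicit affine function of $\sup_\mu E(\mu)$, the supremum being taken over probability measures $\mu$ on $[0,\infty)$ with $\int t^p\,\dif\mu(t)\le 1$. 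Reflecting $\mu$ to a symmetric probability measure $\nu$ on $\R$ rewrites this as a weighted equilibrium problem on $\R$ with external field $Q(x)=|x|^p$.

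The unique extremal measure is the Ullman distribution on $[-\kappa_p,\kappa_p]$ with density
\[
u_p(x)\;=\;\frac{p}{\pi}\int_{|x|}^{\kappa_p}\frac{s^{p-1}}{\sqrt{s^2-x^2}}\,\dif s,
\]
where the support parameter $\kappa_p$ is fixed by saturating the constraint $\int |x|^p u_p(x)\,\dif x = 1$; a direct Beta-integral evaluation expresses $\kappa_p^p$ in terms of $\Gamma(p+1)$ and $\Gamma(p+\tfrac12)$, already producing the Gamma-ratio in the statement. The Euler--Lagrange identity
\[
-\int \log|x-y|\,u_p(y)\,\dif y+\tfrac{1}{p}|x|^p \;=\; F_p,\qquad x\in[-\kappa_p,\kappa_p],
\]
integrated against $u_p(x)\,\dif x$ and using the saturated constraint, reduces the logarithmic energy of $u_p$ to $F_p-\tfrac{1}{p}$, so the whole problem collapses to the explicit evaluation of the Mhaskar--Rakhmanov--Saff / Robin constant $F_p$.

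The main obstacle, and the bulk of the work, is this last closed-form computation of $F_p$. A clean route is to evaluate the Euler--Lagrange identity at a convenient point (say $x=0$, or at the endpoint $x=\kappa_p$ using the known square-root vanishing of $u_p$), which reduces $F_p$ to an iterated Beta-function integral. Combining $F_p$ with the scaling $\kappa_p$ and the multiplicative constants collected from the Weyl formula and Stirling then reassembles the prefactor $\tfrac14$ and the $\sqrt{\eee}$ in the denominator of the claimed formula (the former from the scale of the support $[-\kappa_p,\kappa_p]$, the latter from exponentiating the $-\tfrac{1}{p}$ in $F_p-\tfrac{1}{p}$). As a sanity check, the formula should recover Saint Raymond's explicit values $\Delta(1)=\eee^{-1/2}$ and $\Delta(\infty)=1/4$.
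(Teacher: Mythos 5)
Your overall strategy—reduce $\Delta(p)$ to a constrained weighted logarithmic energy on $\R_+$, fold onto $\R$, and evaluate at the Ullman equilibrium measure—is essentially the route the paper takes. But there is a genuine parameter error that would make your computation produce the wrong constant, and one step is treated too lightly.

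The parameter error: Saint Raymond's $\Delta(p)$ is defined (see \eqref{eq:log_Delta_n}) via the kernel $\log|t_i-t_j|$ and the constraint $\frac1n\sum t_i^p$; the constant that appears in the volume asymptotics \eqref{eq:RaymondReal} is $\Delta(p/2)$, precisely because one substitutes $u_i=s_i^2$ in the Weyl density $\prod_{i<j}|s_i^2-s_j^2|$. Your functional $E(\mu)=\iint\log|s^2-t^2|\,d\mu\,d\mu$ with constraint $\int t^p\,d\mu\le1$ is therefore the continuous analogue of $\Delta(p/2)$, not $\Delta(p)$. Following your steps, the scale equation $\int|x|^p u_p(x)\,dx=1$ for the Ullman density with parameter $p$ gives $\kappa_p^p=\frac{2\sqrt\pi\,\Gamma(\frac p2+1)}{\Gamma(\frac{p+1}2)}$, i.e.\ $\Gamma(\frac p2+1)$ and $\Gamma(\frac{p+1}2)$, \emph{not} $\Gamma(p+1)$ and $\Gamma(p+\frac12)$ as you assert. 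The paper avoids this by starting directly from Saint Raymond's discrete characterization of $\Delta(p)$ (kernel $\log|t_i-t_j|$, constraint $\sum t_i^p$) and then applying the substitution $V=U^2$ with $U=\eps\sqrt V$, which yields the identity $\mathscr J_p(\mu)=2\mathscr J_{2p}(\mathcal L_U)$ and hence the Ullman distribution with parameter $2p$. To repair your argument, either first change variables $u=s^2$ in the Weyl integral (reproducing Saint Raymond's setup) and then aim for $\Delta(p/2)$, or keep $E$ as written and accept that it computes $\Delta(p/2)$, relabelling at the end.

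The other gap is the reduction from the discrete to the continuous extremal problem, which you compress into ``a Laplace/large-deviation argument.'' This is where the paper spends most of its effort: Proposition~\ref{prop:cont_sup} proves the lower bound via the SLLN for $U$-statistics and the upper bound by an explicit smoothing of Saint Raymond's discrete maximizers (using his quantitative separation $t^*_{i,n}-t^*_{i-1,n}\ge n^{-C}$). An LDP for constrained log-gases in the spirit of Ben Arous--Guionnet would also work, but establishing the matching upper bound on the volume integral is a nontrivial step that your sketch does not address. Your plan to read off the logarithmic energy from the Euler--Lagrange relation and the Robin constant $F_p$ is a legitimate alternative to the paper's direct use of the closed-form Ullman log-energy $\E\log|U-V|=-\log2-\frac1{2p}$ (Lemma~\ref{lem:moment ullman}); the two computations are equivalent in content. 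Finally, a small conceptual slip: the prefactor $\frac14$ and the $\sqrt\eee$ in the theorem come entirely from the Ullman energy ($-2\log2-\frac1{2p}$ after the $V=U^2$ doubling), not from ``multiplicative constants collected from the Weyl formula and Stirling''—those are absorbed into the $\sqrt{2\pi\eee^{3/2}}$ and $n^{-1/2}$ in \eqref{eq:RaymondReal}, which are external to $\Delta(p)$.
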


The proof relies on a reduction trick, turning the discrete variational problem of \cite{R1984} into a continuous one, which allows us to use the theory of logarithmic potentials with external fields. In some part of a companion paper~\cite{KPTMatrixBalls}, we study a similar question for unit balls in the spaces of self-adjoint matrices. There are some similarities between both problems, although it seems that neither one can be reduced to the other. The main difference is that Schatten $p$-balls lead to variational problems on the positive half-line, whereas self-adjoint balls lead to variational problems on the whole line.

\subsection{Volume ratio of Schatten balls}
We turn now to an application of Theorem \ref{theo:main_Delta_p}. For this let us recall that for a $N$-dimensional convex body $K$ the \textit{volume ratio} ${\rm vr}(K)$ is defined as
$$
{\rm vr}(K) = \inf\left({\text{Vol}_N(K)\over\text{Vol}_N(\mathcal{E})}\right)^{1/N},
$$
where the infimum is taken over all ellipsoids $\mathcal{E}$ which are contained in $K$. One can in fact show that there is a unique ellipsoid $\mathcal{E}$, referred to as the John ellipsoid, of maximal volume which is contained in $K$. The volume ratio is a very powerful concept in asymptotic geometric analysis that has its origin in the groundbreaking works of Szarek \cite{S1978}, and Szarek and Tomczak-Jaegermann \cite{ST1980} who extracted this core notion behind a famous result of Ka\v{s}in on nearly Euclidean decompositions of $\ell_1^n$ and successfully applied their ideas to study several classes of finite-dimensional normed spaces using this affine invariant.
Since then the volume ratio appeared in many places, for example, in an estimate of the volume ratio in terms of the Rademacher cotype-$2$ constant by Bourgain and Milman \cite{BM1987}, in Ball's volume ratio inequality \cite{BallInequality}, or in Bourgain, Klartag and Milman's reduction of the hyperplane conjecture \cite{BourgainKlartagMilman}. It has also significant applications in approximation theory. We refer the reader to the monographs \cite{AsymptoticGeometricAnalysisBookPart1,IsotropicConvexBodies,TJ1974} for further background material. Before we state our next result let us recall that Szarek and Tomczak-Jaegermann \cite[Proposition 3.1]{ST1980} proved that, for all $n\in\N$,
\[
{\rm vr}\big(B_1^n(\F)\big) \leq 32\,000,
\]
at the same time obtaining bounds for general unitary operator ideals of operators on Hilbert spaces \cite[Proposition 3.2]{ST1980}.
Later, Saint Raymond \cite[Th\'eor\`eme 9]{R1984} obtained asymptotically, as $n\to\infty$,
\[
{\rm vr}\big(B_1^n(\R)\big) \sim \sqrt{\frac{\Delta(1/2)}{\Delta(1)}} \leq \frac{2}{e^{1/4}}.
\]
Our main result, Theorem \ref{theo:main_Delta_p}, can be used to determine asymptotically, as $n\to\infty$, the precise volume ratio of the Schatten $p$-balls $B_p^n(\F)$ in the full regime $1\leq p \leq \infty$.

\begin{thm}\label{thm:VolumeRatio}
Let $\F\in\{\R,\C\}$. For $1\leq p<2$ we have that, as $n\to\infty$,
$$
{\rm vr}\big(B_p^n(\F)\big) \sim \sqrt{\Delta(p/2)\over\Delta(1)} =  {1\over 2}\left({\Gamma({p\over 2}+1)\over\Gamma({p\over 2}+{1\over 2})}\right)^{1\over p}\sqrt{\eee^{{1\over 2}-{1\over p}}(4\pi)^{1\over p}}\,,
$$
while if $2\leq p\leq\infty$ we have, as $n\to\infty$,
$$
{\rm vr}\big(B_p^n(\F)\big) \sim n^{{1\over 2}-{1\over p}}\sqrt{\Delta(p/2)\over\Delta(1)} =  n^{{1\over 2}-{1\over p}}{1\over 2}\left({\Gamma({p\over 2}+1)\over\Gamma({p\over 2}+{1\over 2})}\right)^{1\over p}\sqrt{\eee^{{1\over 2}-{1\over p}}(4\pi)^{1\over p}}\,.
$$
\end{thm}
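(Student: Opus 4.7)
The plan is to identify the John ellipsoid of $B_p^n(\F)$ explicitly as a scaled Hilbert--Schmidt ball and then to reduce the computation of the volume ratio to a direct application of Theorem \ref{theo:main_Delta_p} combined with Saint Raymond's asymptotic volume formulas \eqref{eq:RaymondReal} and \eqref{eq:RaymondComplex}.

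First, I observe that $B_p^n(\F)$ is invariant under the natural action of $O(n)\times O(n)$ (for $\F=\R$), respectively $U(n)\times U(n)$ (for $\F=\C$), on $\Mat_n(\F)$ given by $A\mapsto U A V^{-1}$, since this action preserves all singular values and therefore every Schatten norm. By uniqueness of the John ellipsoid it must share this invariance, and a short symmetry argument -- exploiting the real irreducibility of the above representation -- shows that the only centered ellipsoids stable under this action are multiples of the Hilbert--Schmidt ball $B_2^n(\F)$. Consequently the John ellipsoid equals $r_p\, B_2^n(\F)$, where the inradius $r_p$ is determined by the standard monotonicity of Schatten $p$-norms: we have
\[
r_p=\Big(\sup_{\|A\|_{\Sc^n_2}=1}\|A\|_{\Sc^n_p}\Big)^{-1}= \begin{cases} n^{\frac 12 -\frac 1p} &: 1\leq p\leq 2,\\ 1 &: 2\leq p\leq \infty,\end{cases}
\]
the first supremum being attained on matrices with all singular values equal (for instance scaled orthogonal/unitary matrices) and the second already on rank-one matrices.

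With this identification in hand, the volume ratio can be written as
\[
{\rm vr}\big(B_p^n(\F)\big)=\frac{\big(\text{Vol}\,B_p^n(\F)\big)^{1/N}}{r_p\,\big(\text{Vol}\,B_2^n(\F)\big)^{1/N}},
\]
where $N=n^2$ if $\F=\R$ and $N=2n^2$ if $\F=\C$. Applying \eqref{eq:RaymondReal} (respectively \eqref{eq:RaymondComplex}) both with exponent $p$ in the numerator and with exponent $2$ in the denominator, the common $\sqrt{2\pi \eee^{3/2}}$ (respectively $\sqrt{\pi \eee^{3/2}}$) factor cancels and the remaining $n$-dependence consolidates to
\[
{\rm vr}\big(B_p^n(\F)\big)\sim \frac{n^{\frac 12 -\frac 1p}}{r_p}\sqrt{\frac{\Delta(p/2)}{\Delta(1)}},
\]
which equals $\sqrt{\Delta(p/2)/\Delta(1)}$ for $1\leq p\leq 2$ and $n^{1/2-1/p}\sqrt{\Delta(p/2)/\Delta(1)}$ for $2\leq p\leq \infty$, reproducing the two regimes of the statement.

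To close, I substitute $\Delta(1)=\eee^{-1/2}$ (explicit from the classical formula for the volume of the Euclidean unit ball and also recorded in \cite{R1984}) together with the value of $\Delta(p/2)$ supplied by Theorem \ref{theo:main_Delta_p}; a short algebraic manipulation then produces the closed-form expressions displayed in the theorem. The only genuinely non-routine step of the argument is the identification of the John ellipsoid as a multiple of $B_2^n(\F)$; this is where the main, albeit mild, obstacle lies. However, the result is essentially folklore and is already implicit in Saint Raymond's treatment of the nuclear-norm case $p=1$.
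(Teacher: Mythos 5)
Your proposal is correct and follows essentially the same route as the paper: identify the John ellipsoid of $B_p^n(\F)$ as a scaled Hilbert--Schmidt ball, compute the scaling factor (equivalently, $\|{\rm id}:\mathcal{S}_2^n\to\mathcal{S}_p^n\|$), and then combine Saint Raymond's volume asymptotics \eqref{eq:RaymondReal}, \eqref{eq:RaymondComplex} with Theorem \ref{theo:main_Delta_p}. The only cosmetic difference is that you identify the John ellipsoid via a direct irreducibility/commutant argument for the $O(n)\times O(n)$ (resp.\ $U(n)\times U(n)$) action, whereas the paper invokes the general ``enough symmetries'' criterion and cites the fact that $\mathcal{S}_p^n(\F)$ has this property; both lead to the formula $\mathcal{E}_X=\|{\rm id}:\ell_2^N\to X\|^{-1}\mathbb{B}_2^N$ and the rest of the computation is identical.
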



\subsection{Organization of the paper}
In Section \ref{subsec:Reduction} we present the reduction trick of Saint Raymond's discrete variational problem to a corresponding continuous problem that allows us to apply methods and ideas from the theory of logarithmic potentials with external fields. Section \ref{subsec:Ullmann} is devoted to the Ullman distribution, which is the unique maximizer in this variational problem, while in Section \ref{subsec:Proof} we present the proof of Theorem \ref{theo:main_Delta_p}. Finally, in Section \ref{subsec:Proof2}, we prove Theorem \ref{thm:VolumeRatio}.

\section{Proof of Theorem~\ref{theo:main_Delta_p}}

We start with the following observation.
The equality $\Delta(\infty) = 1/4$ was established by Saint Raymond~\cite[Corollaire~4 on p.~69]{R1984}.
For $0<p<\infty$, which is always assumed in the following, Saint Raymond~\cite[p.~70]{R1984} characterized the constant $\Delta(p)$ as the limit of $\Delta_n(p)$, as $n\to\infty$, where
\begin{equation}\label{eq:log_Delta_n}
\log \Delta_n (p) = \sup_{0\leq t_1\leq \ldots \leq t_n} \left( \frac{2}{n(n-1)} \sum_{1\leq i< j\leq n}\log |t_i-t_j| - \frac 1p \log \left(\frac1n  \sum_{i=1}^n t_i^p\right) \right).
\end{equation}
He then showed that the positive sequence $\Delta_n(p)$ is decreasing, which implies that it converges to a limit, as $n\to\infty$. By providing bounds on this limit, he showed that it is non-zero, but, as \eqref{eq:RaymondReal} and \eqref{eq:RaymondComplex} show, these bounds were not sharp. We shall compute the limiting constant precisely.

\subsection{Reduction to the continuous problem}\label{subsec:Reduction}

The first step in the computation of $\Delta(p)$ is to replace the supremum over the points $0\leq t_1\leq \ldots\leq t_n$ in \eqref{eq:log_Delta_n} by its continuous version, namely the supremum over probability measures on the positive half-line. For this purpose, let $\mathcal M_1^p(\R)$ be the set of all probability measures on $\R$ with $\int_{\R} |x|^p \mu(\dint x) < \infty$. Similarly, denote by $\mathcal M_1^p(\R_+)$ the set of all probability measures on $\R_+ = [0,\infty)$ that satisfy $\int_{\R_+} x^p \mu(\dint x) < \infty$. Let us write $\delta_0$ for the Dirac measure at $0$.

On the set $\mathcal M_1^p(\R)\backslash \{\delta_0\}$ we consider the functional
\begin{equation}
\mathscr J_p(\mu):= \int_{\R} \int_{\R} \log|x-y|\,\mu(\dint x)\,\mu(\dint y) - \frac{1}{p}\log \int_\R |x|^p \,\mu(\dint x),
\end{equation}
which takes values in $\R\cup\{-\infty\}$. We shall now demonstrate that the limit of $(\log \Delta_n(p))_{n\in\N}$ coincides with the supremum that $\mathscr J_p(\cdot)$ takes on the set $\mathcal M_1^p(\R_+)\backslash \{\delta_0\}$.

\begin{proposition}\label{prop:cont_sup}
For all $0<p<\infty$ we have
$$
\lim_{n\to\infty}\log \Delta_n(p) = \sup_{\mu \in \mathcal M_1^p(\R_+)\backslash\{\delta_0\}}  \mathscr J_p(\mu).
$$
\end{proposition}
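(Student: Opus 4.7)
The plan is to prove the proposition by establishing the two matching inequalities
\begin{equation*}
\liminf_n \log\Delta_n(p) \ \geq\ \sup_{\mu}\mathscr J_p(\mu)
\quad\text{and}\quad
\limsup_n\log\Delta_n(p) \ \leq\ \sup_{\mu}\mathscr J_p(\mu).
\end{equation*}
The starting observation, used in both directions, is that both functionals are \emph{scale invariant}: substituting $t_i\mapsto\lambda t_i$ in~\eqref{eq:log_Delta_n} or pushing $\mu$ forward under $x\mapsto\lambda x$ shifts each of the two summands by exactly $\log\lambda$, leaving the difference unchanged. This freedom will be used to normalize configurations in each direction.

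For the lower bound, fix $\mu\in\mathcal M_1^p(\R_+)\setminus\{\delta_0\}$; the case $\mathscr J_p(\mu)=-\infty$ being trivial, I assume finiteness, which forces $\mu$ to be atomless. A standard mollification-and-truncation (convolve with a smooth kernel of vanishing width, then restrict to a compact subinterval of $(0,\infty)$) reduces matters to the case in which $\mu$ admits a continuous density $f$ with $0<c\leq f\leq M$ on some $[a,b]\subset(0,\infty)$, both summands of $\mathscr J_p$ being continuous under the approximation. For such a $\mu$ I take the quantile points $t_i^{(n)}=F_\mu^{-1}((i-\tfrac12)/n)$. The upper density bound $f\leq M$ yields the key separation $|t_i^{(n)}-t_j^{(n)}|\geq|i-j|/(Mn)$, which renders the near-diagonal part of the double log-sum of size $O(n^{-1}\log n)$, while Riemann-sum convergence under the change of variable $u=F_\mu(x)$ handles the rest. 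Combined with $\frac1n\sum_i (t_i^{(n)})^p\to\int|x|^p\,\mu(\dint x)$, admissibility of $(t_i^{(n)})$ in~\eqref{eq:log_Delta_n} yields $\liminf_n\log\Delta_n(p)\geq\mathscr J_p(\mu)$.

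For the upper bound, pick $(t_i^{(n)})$ that is $n^{-1}$-optimal in~\eqref{eq:log_Delta_n}; finiteness of $\log\Delta_n(p)$ forces the $t_i^{(n)}$ to be distinct, and scale invariance lets me normalize $\frac1n\sum_i(t_i^{(n)})^p=1$. The empirical measures $\mu_n:=\frac1n\sum_i\delta_{t_i^{(n)}}$ then satisfy $\int|x|^p\,\mu_n(\dint x)=1$, so by Markov's inequality they form a tight family on $\R_+$, from which I extract a weak subsequential limit $\mu_{n_k}\to\mu^*$. A splitting argument (``small'' $t_i\leq\varepsilon$ versus ``large'' $t_i>\varepsilon$, combined with $\max_i t_i\leq n^{1/p}$ and $x\log(1/x)\to 0$ as $x\to 0^+$) rules out $\mu^*=\delta_0$: otherwise the double log-sum would diverge to $-\infty$, contradicting Saint Raymond's a priori lower bound on $\log\Delta_n(p)$. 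Next, with $\log^K(t):=\max(\log t,-K)\geq\log t$,
\begin{equation*}
\frac{2}{n(n-1)}\sum_{i<j}\log|t_i^{(n)}-t_j^{(n)}|
\ \leq\ \frac{n}{n-1}\iint\log^K|x-y|\,\mu_n(\dint x)\,\mu_n(\dint y) + \frac{K}{n-1},
\end{equation*}
and the growth of $\log^K|x-y|$ at infinity is tamed by the uniform $p$-moment (by further truncating $\mu_n$ to $[0,R]$ and letting $R\to\infty$). Weak convergence along $(n_k)$ passes this integral to $\iint\log^K\,\mu^*(\dint x)\,\mu^*(\dint y)$; monotone convergence in $K\to\infty$ produces $\iint\log|x-y|\,\mu^*(\dint x)\,\mu^*(\dint y)$; and Fatou's inequality $\int|x|^p\,\mu^*(\dint x)\leq 1$ (so $-\tfrac1p\log\int|x|^p\,\mu^*(\dint x)\geq 0$) combine to give $\limsup_n\log\Delta_n(p)\leq\mathscr J_p(\mu^*)\leq\sup_\mu\mathscr J_p(\mu)$.

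The chief difficulty throughout is the logarithmic singularity of $\log|x-y|$ on the diagonal, which must be handled simultaneously with possible concentration near $0$ (controlled in the lower bound by working with a density bounded below) and escape to infinity (excluded in the upper bound using Saint Raymond's a priori estimate and tamed by log-truncation). The mollification step in the lower bound is also delicate, since one must preserve the value of the log-energy when approximating an atomless $\mu$ by smooth measures living on a compact subinterval bounded away from $0$.
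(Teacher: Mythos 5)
Your plan is sound but takes a genuinely different route from the paper on both halves, and one of your steps deserves more scrutiny than you give it.

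For the lower bound, the paper does something substantially simpler: it draws $V_1,V_2,\ldots$ i.i.d.\ from $\mu$, uses the strong law of large numbers for $U$-statistics to get $\frac{2}{n(n-1)}\sum_{i<j}\log|V_i-V_j|\to\E\log|V_1-V_2|$ almost surely (the integrability hypothesis $\E\,|\log|V_1-V_2||<\infty$ being exactly what finiteness of $\mathscr J_p(\mu)$ together with the $p$-th moment provides), and then plugs any realization into the discrete supremum defining $\Delta_n(p)$. This bypasses entirely the mollification, truncation, and Riemann-sum machinery that you invoke. Your version is not wrong, but the claim that ``both summands of $\mathscr J_p$ are continuous under the approximation'' hides a real technical point: the logarithmic energy is only \emph{upper} semi-continuous under weak convergence, so the inequality $\liminf_\epsilon\mathscr J_p(\mu*\phi_\epsilon)\geq\mathscr J_p(\mu)$ that you need does not come for free. (Pointwise, $(\phi_\epsilon*\check\phi_\epsilon*\log)(a)<\log|a|$ for $|a|>2\epsilon$, so mollification in fact \emph{decreases} the log-kernel away from the diagonal; one must play the small-$|x-y|$ gain against the bulk loss, using finiteness of the log-energy to show the net error vanishes.) The Riemann-sum step also needs a two-scale argument (cut at $|i-j|\leq\delta n$, let $n\to\infty$, then $\delta\to 0$) rather than the single fixed-$K$ diagonal band you describe. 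All of this can be carried through, but it is considerably more work than the LLN argument, and you should either supply the details or switch to the random-points device.

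For the upper bound your approach is genuinely different from the paper's and, in my view, more in the spirit of standard large-deviations arguments. The paper keeps the deterministic maximizer $(t^*_{i,n})$ from Saint Raymond's Lemme 6 and builds an absolutely continuous competitor $\nu_n$ by spreading each $t^*_{i,n}$ over a tiny interval; the separation estimate $t^*_{i,n}-t^*_{i-1,n}\geq n^{-C}$ is used crucially to control self- and near-neighbour interactions of these intervals. You instead pass to the empirical measures $\mu_n$, extract a weak subsequential limit, and use the truncated kernel $\log^K$ together with the uniform $p$-th-moment bound to get $\limsup_n\log\Delta_n(p)\leq\iint\log^K\dif\mu^*\dif\mu^*$, then let $K\to\infty$ by monotone convergence, and finally apply Fatou (Portmanteau for the nonnegative l.s.c.\ function $x^p$) to get $\int x^p\dif\mu^*\leq 1$. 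This avoids the spacing estimate entirely, which is a real simplification; the only external input you need is \emph{some} finite lower bound on $\log\Delta_n(p)$, which is available either from Saint Raymond or from a single instance of your own lower bound with one nice test measure. One caution: your sketch of the $\mu^*\neq\delta_0$ step invokes $\max_i t_i\leq n^{1/p}$, which if used directly makes the ``at least one large'' pairs contribute $O(\log n)$ and defeats the purpose. The cleaner bound for those pairs is $\log|t_i-t_j|\leq\log_+t_i+\log_+t_j\leq\frac1p(t_i^p+t_j^p)$, so that the ``not both small'' part is $O(1)$ uniformly in $n$, while the ``both small'' part is $\leq(1-o(1))\log\varepsilon\to-\infty$ as $\varepsilon\to0$; this gives the contradiction without any $\log n$ losses.
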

\begin{proof}
We split the proof into a lower and an upper bound.

\medbreak
\paragraph{\it Lower bound.} Let us prove that
\begin{equation}\label{eq:lower_bound}
\lim_{n\to\infty} \log \Delta_n(p) \geq \mathscr J_p(\mu)
\end{equation}
for an arbitrary probability measure $\mu \in \mathcal M_1^p(\R_+)\backslash \{\delta_0\}$. We assume that
$$
\int_{\R_+} \int_{\R_+} \log|x-y|\,\mu(\dint x)\,\mu(\dint y) \neq -\infty,
$$
because otherwise the statement is evident.
Since by the definition of $\mathcal{M}_1^p(\R_+)\backslash\{\delta_0\}$ the $p$-th moment of $\mu$ is finite, the above double integral cannot take the value $+\infty$, so it must be finite.  Let $V_1,V_2,\ldots$ be i.i.d.\ non-negative random variables with probability distribution $\mu$. The strong law of large numbers for $U$-statistics, see~\cite[Theorem 3.1.1]{koroljuk_book}, yields the almost sure convergence
\begin{equation}\label{eq:ASC1}
\frac 2 {n(n-1)} \sum_{1\leq i < j\leq n} \log |V_i-V_j| \toas \E \log |V_1-V_2|.
\end{equation}
On the other hand, the strong law of large numbers for sums of i.i.d.\ random variables yields
\begin{equation}\label{eq:ASC2}
\frac 1p \log \left(\frac 1n \sum_{i=1}^n V_i^p\right) \toas \frac 1p \log \E V_1^{p}.
\end{equation}
By~\eqref{eq:log_Delta_n}, we have that for each realization
\[
\log \Delta_n (p)
\geq
\frac 2 {n(n-1)} \sum_{1\leq i < j\leq n} \log |V_i-V_j| - \frac 1p \log \left(\frac 1n \sum_{i=1}^n V_i^p\right).
\]
Therefore, using~\eqref{eq:ASC1} and \eqref{eq:ASC2}, we get
\[
 \log \Delta(p) = \lim_{n\to\infty} \log \Delta_n(p) \geq \E \log|V_1-V_2|-\frac{1}{p}\log \E V_1^p = \mathscr J_{p}(\mu),
\]
thus proving the lower bound~\eqref{eq:lower_bound}.

\medbreak

\paragraph{\it Upper bound.} Our next aim is to prove that there is a sequence of probability measures $\nu_1,\nu_2,\ldots \in \mathcal M_1^p(\R_+)\backslash\{\delta_0\}$ such that
\begin{equation}\label{eq:upper_bound}
\liminf_{n\to\infty} \mathscr J_p(\nu_n)\geq \log \Delta(p).
\end{equation}
Saint Raymond~\cite[Lemme~6 on p.~71]{R1984} showed that there is a maximizer of the right-hand side of~\eqref{eq:log_Delta_n}, which we denote by $(t_{1,n}^*,\dots,t_{n,n}^*)$, and which has the following properties:
\begin{equation}\label{eq:raymond_properties}
0 = t_{1,n}^* < \ldots < t_{n,n}^*,
\qquad
\frac 1n \sum_{i=1}^n (t_{i,n}^*)^p = 1,
\qquad\text{and}\qquad
t_{i,n}^*-t_{i-1,n}^* \geq n^{-C}
\end{equation}
for all $n\geq 2$,  $i\in \{2,\ldots,n\}$ and some constant $C=C(p)>0$.  In fact, Saint Raymond normalized the $\ell_p$-norm of the maximizer to be $1$, but for us it is more convenient to set it to be $n^{1/p}$, as above. This is possible since the expression on the right-hand side of~\eqref{eq:log_Delta_n} remains unchanged if we replace $(t_1,\ldots,t_n)$ by $(a t_1,\ldots, at_n)$ for $a>0$.  For  $n\geq 2$, we put $\eps_n := n^{-2 C}$ and consider an absolutely continuous probability measure $\nu_n$ on $\R_+$ with Lebesgue density
$$
f_n(t) =
\frac 1 {n\eps_n} \mathbbm 1_{[0, \eps_n]} (t) +
\frac 1 {n\eps_n} \sum_{i=2}^n \mathbbm 1_{[t_{i,n}^*-\eps_n, t_{i,n}^*]}(t),\qquad t\in\R_+.
$$
Note that $\nu_n$ is the uniform distribution on the union of the intervals $B_{1,n}:=[0, \eps_n]$ and $B_{i,n} := [t_{i,n}^*-\eps_n, t_{i,n}^*]$, for $i=2,\ldots,n$.
For sufficiently large $n$, the intervals $B_{1,n},\ldots,B_{n,n}$ are disjoint by~\eqref{eq:raymond_properties} and we have
\begin{align}\label{ineq:integral estimate}
\int_{\R_+} t^p f_n(t) \,\dint t = \frac 1 {n\eps_n} \sum_{i=1}^n  \int_{B_{i,n}} t^p\, \dint t
\leq \frac{\eps_n^p}{n}  + \frac 1n \sum_{i=2}^n (t_{i,n}^*)^p
= \frac{\eps_n^p}{n} + \frac 1n \sum_{i=1}^n (t_{i,n}^*)^p.
\end{align}
We claim that
\begin{equation}\label{eq:log_energy_f_n}
\int_{\R_+}\int_{\R_+} f_n(x) f_n(y) \log |x-y| \,\dint x \,\dint y
= \frac{2}{n^2} \sum_{1\leq i < j \leq n} \log |t_{i,n}^* - t_{j,n}^*| -o(1),
\end{equation}
where $o(1)$ denotes any sequence converging to $0$ as $n\to\infty$. From this in combination with \eqref{ineq:integral estimate} it would follow that
\begin{align*}
&\liminf_{n\to\infty} \mathscr J_p(\nu_n)\\
&=
\liminf_{n\to\infty} \left(\int_{\R_+}\int_{\R_+} f_n(x) f_n(y) \log |x-y| \,\dint x\, \dint y - \frac 1p \log \left(\int_{\R_+} t^p f_n(t)\,\dint t\right)\right)
\\
&\geq
\liminf_{n\to\infty}
\left(\frac{2}{n^2} \sum_{1\leq i < j \leq n} \log |t_{i,n}^* - t_{j,n}^*| -o(1) - \frac{1}{p}\log \left(\frac{1}{n}\sum_{i=1}^n (t_{i,n}^*)^p +\frac{\varepsilon_n^p}{n}\right)\right)\\
&=
\liminf_{n\to\infty}
\left(\frac{2}{n^2} \sum_{1\leq i < j \leq n} \log |t_{i,n}^* - t_{j,n}^*| -o(1) - \frac{1}{p}\log \left(\frac{1}{n}\sum_{i=1}^n (t_{i,n}^*)^p\right)\right)\\
&=
\liminf_{n\to\infty}
\left(\frac{2}{n(n-1)} \sum_{1\leq i < j \leq n} \log |t_{i,n}^* - t_{j,n}^*| - \frac 1p \log \left(\frac{1}{n}\sum_{i=1}^n (t_{i,n}^*)^p\right)\right)\frac{n(n-1)}{n^2}\\
&= \liminf_{n\to\infty} \log \Delta_n(p) =
\log \Delta(p),
\end{align*}
thus proving the upper bound~\eqref{eq:upper_bound}. Here, we used that $\frac 1n \sum_{i=1}^n (t_{i,n}^*)^p = 1$ and $\eps_n\to 0$ as $n\to\infty$.

%

\medbreak

\paragraph{\it Proof of~\eqref{eq:log_energy_f_n}.} We represent the double integral on the left-hand side of~\eqref{eq:log_energy_f_n} as the following double sum,
$$
\int_{\R_+}\int_{\R_+} f_n(x) f_n(y) \log |x-y| \,\dint x\, \dint y =  \frac {1}{n^2\eps_n^2}\sum_{i=1}^n\sum_{j=1}^n \int_{B_{i,n}}\int_{B_{j,n}} \log |x-y| \,\dint x \,\dint y.
$$
Observe that each summand on the right-hand side represents the ``interaction'' between the intervals $B_{i,n}$ and $B_{j,n}$ for $i,j\in\{1,\dots,n\}$.

\medbreak

\paragraph{\it Case 1: Self-interaction terms.} Let us take some $i\in \{2,\ldots,n\}$ and consider  the interaction of the interval $B_{i,n} = [t_{i,n}^* -\eps_n, t_{i,n}^*]$ with itself. If we denote by $X$ and $Y$ two independent random variables with uniform distribution on the interval $[0,1]$, then $t_{i,n}^* -\eps_n X$ and $t_{i,n}^* -\eps_n Y$ are uniformly distributed on the interval $B_{i,n}$ and we can write
\begin{align*}
\frac 1 {n^2 \eps_n^2} \int_{B_{i,n}} \int_{B_{i,n}} \log |x-y| \,\dint x\, \dint y
&=
\frac {\E \log |(t_{i,n}^* -\eps_n X) - (t_{i,n}^* -\eps_n Y)|}{n^2}\\
&=
\frac {\log \eps_n + \E \log |X-Y|}{n^2}
=
O\left(\frac{\log n} {n^2}\right)
\end{align*}
by the choice of $\eps_n$. Here we write $a_n=O(b_n)$ for two sequences $(a_n)_{n\in\N}$ and $(b_n)_{n\in\N}$ if there exists a constant $M\in(0,\infty)$ such that $|a_n|\leq Mb_n$ for all sufficiently large $n$. An analogous estimate also holds for  $i=1$.  For the sum of self-interaction terms we thus obtain the upper bound
$$
\sum_{i=1}^n \frac 1 {n^2 \eps_n^2} \int_{B_{i,n}} \int_{B_{i,n}} \log |x-y| \,\dint x\, \dint y = O\left(\frac{\log n} {n}\right)=o(1).
$$

\medbreak

\paragraph{\it Case 2: Interactions between different intervals.} Take some  $i,j\in \{2,\ldots,n\}$ with $i\neq j$.  If $X$ and $Y$ are, as above, independent random variables with uniform distribution on the interval $[0,1]$, then the random variables $t^*_{i,n} - \eps_n X$ and $t^*_{j,n} - \eps_n Y$ are uniformly distributed  on the intervals  $B_{i,n}= [t_{i,n}^*-\eps_n, t_{i,n}^*]$ and $B_{j,n} = [t_{j,n}^*-\eps_n, t_{j,n}^*]$, respectively.  Thus,
\begin{align*}
\frac 1 {n^2 \eps_n^2} \int_{B_{i,n}} \int_{B_{j,n}} \log |x-y| \,\dint x\, \dint y
&=
\frac {\E \log |(t_{i,n}^* - \eps_n X) - (t_{j,n}^* - \eps_n Y)|}{n^2} \\
&=
\frac 1 {n^2} \, \E \log |t_{i,n}^*  - t_{j,n}^*| +  \frac 1{n^2} \E \log \left|1 +  \eps_n  \frac{Y- X}{t_{i,n}^*  - t_{j,n}^*} \right|.
\end{align*}
Recalling that $|t_{i,n}^*-t_{j,n}^*| > n^{-C}$ and $\eps_n = n^{-2C}$, we arrive at
$$
\frac 1{n^2}\, \E \log \left|1 +  \eps_n  \frac{Y-X}{t_{i,n}^*  - t_{j,n}^*} \right|
=
\frac 1{n^2}\,  O\left(  \frac{\eps_n}{|t_{i,n}^*  - t_{j,n}^*|} \right)
=
O\left(\frac1{n^{2+C}}\right).
$$
The same estimate applies if $i=1$ and $j\in \{2,\ldots,n\}$, but this time the uniform distribution on the intervals $B_{1,n}=[0,\eps_n]$ and $B_{j,n} = [t_{j,n}^*-\eps_n , t_{j,n}^*]$ is represented by the random variables $\eps_n X$ and $t_{j,n}^* - \eps Y_n$.

\medbreak

Taking together the estimates of Case 1 and Case 2, we arrive at
$$
\int_{\R_+}\int_{\R_+} f_n(x) f_n(y) \log |x-y| \,\dint x\, \dint y  =
\frac{2}{n^2} \sum_{1\leq i < j \leq n} \log |t_{i,n}^* - t_{j,n}^*| -o(1),
$$
which completes the proof of~\eqref{eq:log_energy_f_n}.
\end{proof}

\subsection{The Ullman distribution: Maximizer of the functional $\mathscr J_p$}\label{subsec:Ullmann}
In view of Proposition~\ref{prop:cont_sup} it remains to compute the supremum of the functional $\mathscr J_p(\mu)$ over $\mu\in \mathcal M_1^p (\R_+) \backslash \{\delta_0\}$. In fact, the maximizer of the same functional over the larger space $\mathcal M_1^p (\R) \backslash \{\delta_0\}$ is known to be the so-called Ullman distribution.

Let $0< p < \infty$.
We say that a random variable $\mathbb U$ which takes values in the interval  $[-1,1]$ has \textit{Ullman distribution} with parameter $p$, and write $\mathbb U\sim\mathscr {U}(p)$, if its Lebesgue density is given by
\begin{equation}\label{eq:ullman_def}
h_p(x) := {p\over\pi} \int_{|x|}^1{t^{p-1}\over\sqrt{t^2-x^2}}\,\dint t, \qquad x\in [-1,1].
\end{equation}
The Ullman distribution appears as the equilibrium distribution for electric charges on the real line in the external field of the form of a constant multiple of $|x|^p$.  Namely, for a probability measure $\mu\in \mathcal M_1^p(\R)\backslash\{\delta_0\}$ consider the energy functional
$$
\mathscr E_p(\mu) := \int_{\R} \int_{\R} \log \frac{1}{|x-y|}\, \mu(\dint x)\, \mu(\dint y) + 2 \int_{\R} Q_p(x) \mu(\dint x),
$$
with the external field
$$
Q_p(x) = \frac{\sqrt \pi \, \Gamma(\frac{p}{2})} {2\Gamma(\frac{p+1}{2})} |x|^p, \quad x\in\R.
$$
Then the unique minimizer of $\mathscr E_p$ is the Ullman distribution on the interval $[-1,1]$ with density $h_p$ (see, e.g., \cite[Theorem~5.1 on p.~240]{SaffBOOK}). As an easy consequence,  one can derive the following proposition (see \cite[Lemma 3.6]{KPTMatrixBalls}).

\begin{proposition}\label{prop:maximizers_over_R}
Let $p>0$. The only maximizers of the functional
$$
\mathscr J_p(\mu)= \int_{\R} \int_{\R} \log|x-y|\,\mu(\dint x)\,\mu(\dint y) - \frac{1}{p}\log \int_\R |x|^p \,\mu(\dint x)
$$
over $\mathcal M_1^p (\R)\backslash\{\delta_0\}$ are probability measures with densities $\frac{1}{b}h_p(\frac{x}{b})$, $b>0$, where $h_p$ is the Ullman density~\eqref{eq:ullman_def}.
\end{proposition}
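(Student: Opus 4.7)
The strategy rests on a mismatch between the two functionals: $\mathscr J_p$ is \emph{scale invariant}, whereas $\mathscr E_p$ is not. This contrast will allow one to transfer the uniqueness of the Ullman minimizer of $\mathscr E_p$ to the whole one-parameter scaling orbit of maximizers of $\mathscr J_p$. The first step is to verify the scale invariance. Given $b>0$ and $\mu\in \mathcal M_1^p(\R)\setminus\{\delta_0\}$, let $\mu_b$ denote the pushforward of $\mu$ under the map $x\mapsto bx$; equivalently, when $\mu$ has density $f$, $\mu_b$ has density $\tfrac{1}{b}f(\cdot/b)$. Two elementary changes of variable give
\begin{align*}
\int_{\R}\int_{\R}\log|x-y|\,\mu_b(\dint x)\,\mu_b(\dint y) &= \log b + \int_{\R}\int_{\R}\log|x-y|\,\mu(\dint x)\,\mu(\dint y), \\
\tfrac{1}{p}\log\int_{\R}|x|^p\,\mu_b(\dint x) &= \log b + \tfrac{1}{p}\log\int_{\R}|x|^p\,\mu(\dint x),
\end{align*}
so that the two $\log b$ contributions cancel and $\mathscr J_p(\mu_b)=\mathscr J_p(\mu)$.

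Next, abbreviate $I(\mu):=\int_{\R}\int_{\R}\log|x-y|\,\mu(\dint x)\,\mu(\dint y)$ and $M(\mu):=\int_{\R}|x|^p\,\mu(\dint x)$, and set $\lambda_p:=\tfrac{\sqrt\pi\,\Gamma(p/2)}{2\Gamma((p+1)/2)}$, so that $\mathscr E_p(\mu)=-I(\mu)+2\lambda_p M(\mu)$ and $\mathscr J_p(\mu)=I(\mu)-\tfrac{1}{p}\log M(\mu)$. Put $M_0:=M(h_p)\in(0,\infty)$. For any competitor $\mu$ with $\mathscr J_p(\mu)>-\infty$, the scale invariance just established permits replacing $\mu$ by the dilate $\mu_b$ with $b=(M_0/M(\mu))^{1/p}$, so that $M(\mu_b)=M_0$. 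The Saff--Totik minimality of $h_p$ for $\mathscr E_p$ then yields
$$
-I(\mu_b)+2\lambda_p M_0 \;=\; \mathscr E_p(\mu_b) \;\geq\; \mathscr E_p(h_p) \;=\; -I(h_p)+2\lambda_p M_0,
$$
i.e.\ $I(\mu_b)\leq I(h_p)$, and therefore
$$
\mathscr J_p(\mu)=\mathscr J_p(\mu_b) = I(\mu_b)-\tfrac{1}{p}\log M_0 \leq I(h_p)-\tfrac{1}{p}\log M_0 = \mathscr J_p(h_p).
$$
Consequently $h_p$ attains the supremum, and by scale invariance so does every dilate $\tfrac{1}{b}h_p(\cdot/b)$, $b>0$.

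For uniqueness I would run the same argument in reverse. If $\mu^*$ is any maximizer, rescale it so that $M(\mu^*)=M_0$; the chain of inequalities above then becomes a chain of equalities, which forces $\mathscr E_p(\mu^*)=\mathscr E_p(h_p)$, and the uniqueness part of the Saff--Totik theorem identifies $\mu^*=h_p$. Undoing the rescaling recovers the claimed family. I do not foresee any substantial obstacle: the whole proof is essentially a one-step Lagrange-multiplier transfer between $\mathscr E_p$ and $\mathscr J_p$ once the scale invariance is noticed. The only minor technical point to check is that the rescaling parameter $b$ is always well defined — which reduces to $M_0>0$ (immediate, since $h_p$ is not $\delta_0$) and $M(\mu)\in(0,\infty)$ for every non-trivial admissible $\mu$ (immediate from the definition of $\mathcal M_1^p(\R)\setminus\{\delta_0\}$).
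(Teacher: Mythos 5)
Your proof is correct, and it is exactly the derivation the paper points to (the paper cites the companion paper's Lemma 3.6 rather than giving details, but the intended route is precisely what you carry out): observe that $\mathscr J_p$ is dilation-invariant, rescale an arbitrary competitor so its $p$-th moment matches that of $h_p$, and then appeal to Saff--Totik minimality of $h_p$ for $\mathscr E_p(\mu)=-I(\mu)+2\lambda_p M(\mu)$ to compare $I$-values. Your uniqueness step — running the chain of (in)equalities backwards and invoking uniqueness of the $\mathscr E_p$-minimizer — is also the natural completion, and your observation that $M(\mu)\in(0,\infty)$ for any $\mu\in\mathcal M_1^p(\R)\setminus\{\delta_0\}$ (so the rescaling factor $b$ is always well defined, and one may discard $\mu$ with $I(\mu)=-\infty$ as trivially non-maximal) closes the remaining technical points.
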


In the following  we shall also need two more properties of the Ullman distribution, which can be verified by direct computation; see, e.g., \cite[Section 2.5]{KPTMatrixBalls}.

\begin{lemma}\label{lem:moment ullman}
Let $p>0$ and let $U\sim \mathscr U(p)$ and $V\sim \mathscr U(p)$ be two independent Ullman random variables. Then
\begin{align*}
\E|U|^p = \int_{-1}^1 h_p(x) |x|^p\, \dint x =  \frac{\Gamma(\frac{p+1}{2})}{2
\sqrt{\pi}\,\Gamma(\frac{p+2}{2})}
\end{align*}
and
\begin{align*}
\E \log|U-V| = \int_{-1}^1 \int_{-1}^1 h_p(x)\,h_p(y)\,\log |x-y|\, \dint x\, \dint y = -\log 2 - \frac{1}{2p}.
\end{align*}
\end{lemma}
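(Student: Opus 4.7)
The plan is to prove the two identities separately, with the first handled by direct integration and the second by exploiting the equilibrium characterization of $h_p$ together with a single, simpler integral.

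For the moment identity, I would compute $\mathbb{E}|U|^p = 2\int_0^1 x^p h_p(x)\,\dint x$ directly from the definition. Substituting $h_p$ and applying Fubini to swap the order of integration gives
\[
\mathbb{E}|U|^p = \frac{2p}{\pi}\int_0^1 t^{p-1}\int_0^t \frac{x^p}{\sqrt{t^2-x^2}}\,\dint x\,\dint t.
\]
The inner integral becomes $t^p \int_0^{\pi/2}\sin^p\theta\,\dint\theta$ after the substitution $x=t\sin\theta$, and Wallis/Beta evaluation $\int_0^{\pi/2}\sin^p\theta\,\dint\theta = \frac{\sqrt{\pi}\,\Gamma(\frac{p+1}{2})}{2\Gamma(\frac{p+2}{2})}$ together with $\int_0^1 t^{2p-1}\dint t = 1/(2p)$ yields the claimed value.

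For the logarithmic energy, rather than attempting the double integral $\int\int h_p(x)h_p(y)\log|x-y|\,\dint x\,\dint y$ head-on, I would invoke the equilibrium characterization stated right before the lemma: $h_p$ is the unique minimizer of $\mathscr{E}_p$ with external field $Q_p(x) = \frac{\sqrt{\pi}\,\Gamma(p/2)}{2\Gamma((p+1)/2)}|x|^p$. The associated Euler–Lagrange equation is
\[
-\int_{-1}^1 \log|x-y|\,h_p(y)\,\dint y + Q_p(x) = F_p \qquad \text{for all } x \in [-1,1],
\]
where $F_p$ is the (modified) Robin constant. Integrating this identity against $h_p(x)\,\dint x$ produces
\[
\mathbb{E}\log|U-V| = \int_{-1}^1 Q_p(x)\,h_p(x)\,\dint x - F_p.
\]
Using the first part of the lemma, the integral $\int Q_p\,\dint h_p = c_p\,\mathbb{E}|U|^p$ collapses to $\frac{1}{2p}$ after cancelling the Gamma factors (using $\Gamma(\frac{p+2}{2}) = \frac{p}{2}\Gamma(\frac{p}{2})$).

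It remains to identify $F_p$. Evaluating the Euler–Lagrange identity at $x=0$ gives $F_p = -\int_{-1}^1 \log|y|\,h_p(y)\,\dint y$, so the problem reduces to computing this one-dimensional integral. Again using Fubini and the substitution $y=t\sin\theta$,
\[
\int_{-1}^1 \log|y|\,h_p(y)\,\dint y = \frac{2p}{\pi}\int_0^1 t^{p-1}\left(\frac{\pi}{2}\log t + \int_0^{\pi/2}\log\sin\theta\,\dint\theta\right)\dint t,
\]
and the classical identity $\int_0^{\pi/2}\log\sin\theta\,\dint\theta = -\frac{\pi}{2}\log 2$ plus $\int_0^1 t^{p-1}\log t\,\dint t = -1/p^2$ evaluates this to $-\log 2 - 1/p$. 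Hence $F_p = \log 2 + 1/p$, and combining gives $\mathbb{E}\log|U-V| = \frac{1}{2p} - \log 2 - \frac{1}{p} = -\log 2 - \frac{1}{2p}$, as claimed.

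The main obstacle is the logarithmic double integral, and the key trick that avoids it is the reduction via the Euler–Lagrange equation to the single integral $\int\log|y|\,h_p(y)\,\dint y$ evaluated at the interior point $x=0$, where the external field $Q_p$ vanishes. One should be slightly careful that the Euler–Lagrange identity holds pointwise on the support $[-1,1]$ (including the endpoint $0$) with no exceptional set, which is standard for Ullman-type equilibrium problems; this is guaranteed by the regularity theory in \cite{SaffBOOK} cited above.
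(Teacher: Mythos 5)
Your proof is correct. Both parts check out: the first is a clean Fubini/Beta calculation, and the second correctly converts the double logarithmic integral into two one-dimensional computations via the equilibrium (Euler--Lagrange) relation for the weighted energy minimizer. The algebra is right: with $c_p = \frac{\sqrt{\pi}\,\Gamma(p/2)}{2\Gamma((p+1)/2)}$, one has $c_p\,\E|U|^p = \frac{\Gamma(p/2)}{4\Gamma(p/2+1)} = \frac{1}{2p}$, and the potential at the origin evaluates to $\int_{-1}^1 \log|y|\,h_p(y)\,\dint y = -\log 2 - 1/p$, giving $F_p = \log 2 + 1/p$ and hence $\E\log|U-V| = \frac{1}{2p}-F_p = -\log 2 - \frac{1}{2p}$.

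The paper itself does not include a proof of this lemma; it refers to the companion paper \cite[Section~2.5]{KPTMatrixBalls} and describes the identities as verifiable ``by direct computation.'' Your route is a legitimate alternative: instead of attacking the logarithmic double integral head-on (e.g.\ by expanding in Chebyshev polynomials, which is the most common ``direct'' approach), you exploit the fact that $h_p$ is the equilibrium measure for $Q_p$, so that pairing the Euler--Lagrange identity with $h_p$ and separately evaluating it at the single interior point $x=0$ --- where $Q_p$ vanishes --- reduces everything to the moment $\E|U|^p$ (already computed) plus the one-dimensional integral $\int\log|y|\,h_p(y)\,\dint y$. This is the key simplification. The only point requiring a bit of care, which you do flag, is that the equilibrium equality a priori holds only quasi-everywhere on the support; passing to equality at $x=0$ requires continuity of the logarithmic potential $U^{h_p}$ there. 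For $0<p<1$ the density $h_p$ blows up like $|x|^{p-1}$ at the origin, but the potential remains continuous (the density is in $L^q$ for some $q>1$), so the q.e.\ statement upgrades to a pointwise one on all of $[-1,1]$; this is indeed covered by the regularity theory in \cite{SaffBOOK}, so the gap you flag is genuinely routine rather than a hidden obstruction.
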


Finally, we are able to maximize $\mathscr J_p(\mu)$ over $\mathcal M_1^p(\R_+)\backslash\{\delta_0\}$.
\begin{proposition}\label{prop:maximizer_is_ullman}
For each $p>0$ we have
$$
\sup_{\mu \in \mathcal M_1^p(\R_+)\backslash\{\delta_0\}}
\mathscr J_p(\mu)
=
-2\log 2  + \frac 1p \log \left(\frac{2\sqrt \pi\, \Gamma(p+1)}{\sqrt \eee\,\Gamma(p+\frac 12)}\right).
$$
\end{proposition}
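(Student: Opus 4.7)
The plan is to reduce the maximization over $\mathcal M_1^p(\R_+)\setminus\{\delta_0\}$ to the known maximization over $\mathcal M_1^{2p}(\R)\setminus\{\delta_0\}$ via a symmetrized square-root pushforward, and then apply Proposition~\ref{prop:maximizers_over_R} together with Lemma~\ref{lem:moment ullman}.

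The key construction is as follows. Given $\mu \in \mathcal M_1^p(\R_+)\setminus\{\delta_0\}$, let $Y\sim\mu$ and let $\eps=\pm 1$ be an independent symmetric sign, and take $\nu$ to be the law of $\eps\sqrt{Y}$ on $\R$. Then $\nu$ is symmetric about the origin, belongs to $\mathcal M_1^{2p}(\R)\setminus\{\delta_0\}$, and satisfies $\int_\R|x|^{2p}\,\nu(\dint x)=\int_{\R_+} y^p\,\mu(\dint y)$. For independent copies $Y_1,Y_2\sim\mu$ and $\eps_1,\eps_2$, the factorization $|a-b|(a+b)=|a^2-b^2|$ valid for $a,b\ge 0$ yields
\begin{align*}
\int_\R\int_\R \log|x-y|\,\nu(\dint x)\,\nu(\dint y)
&= \tfrac12\,\E\log|\sqrt{Y_1}-\sqrt{Y_2}|+\tfrac12\,\E\log(\sqrt{Y_1}+\sqrt{Y_2})\\
&= \tfrac12\,\E\log|Y_1-Y_2|.
\end{align*}
Combining these two computations gives the key identity
$$
\mathscr J_p(\mu)=2\,\mathscr J_{2p}(\nu).
$$
Conversely, every symmetric $\nu\in\mathcal M_1^{2p}(\R)\setminus\{\delta_0\}$ arises this way (namely from the pushforward of $\nu$ under $x\mapsto x^2$), so taking suprema gives
$$
\sup_{\mu\in\mathcal M_1^p(\R_+)\setminus\{\delta_0\}}\mathscr J_p(\mu)
= 2\sup_{\substack{\nu\in\mathcal M_1^{2p}(\R)\setminus\{\delta_0\}\\ \nu\text{ symmetric}}}\mathscr J_{2p}(\nu).
$$

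By Proposition~\ref{prop:maximizers_over_R} applied with exponent $2p$, the maximizers of $\mathscr J_{2p}$ over all of $\mathcal M_1^{2p}(\R)\setminus\{\delta_0\}$ are the rescaled Ullman distributions with parameter $2p$, which are automatically symmetric about the origin. Thus the restriction to symmetric measures is inessential, and the right-hand side equals $2\,\mathscr J_{2p}(\mathbb U)$ for $\mathbb U\sim\mathscr U(2p)$.

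Finally, specializing Lemma~\ref{lem:moment ullman} to parameter $2p$ gives $\E|\mathbb U|^{2p}=\frac{\Gamma(p+1/2)}{2\sqrt\pi\,\Gamma(p+1)}$ and $\E\log|\mathbb U-\mathbb V|=-\log 2-\tfrac{1}{4p}$, so
$$
2\,\mathscr J_{2p}(\mathbb U)
= -2\log 2 -\tfrac{1}{2p}+\tfrac{1}{p}\log\frac{2\sqrt\pi\,\Gamma(p+1)}{\Gamma(p+\tfrac12)}
= -2\log 2 +\tfrac{1}{p}\log\frac{2\sqrt\pi\,\Gamma(p+1)}{\sqrt\eee\,\Gamma(p+\tfrac12)},
$$
which is the claimed value. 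The main obstacle is recognizing and verifying the identity $\mathscr J_p(\mu)=2\mathscr J_{2p}(\nu)$ (which is what makes the reduction from the half-line to the whole line possible, and what explains the appearance of $\Delta(p/2)$ rather than $\Delta(p)$ in the parametrization later used in Theorem~\ref{thm:VolumeRatio}); once this is in hand, the remainder is a direct computation using the results of Section~\ref{subsec:Ullmann}.
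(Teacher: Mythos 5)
Your proposal is correct and follows essentially the same route as the paper: symmetrize via $\nu = \mathrm{law}(\eps\sqrt{Y})$, establish $\mathscr J_p(\mu)=2\,\mathscr J_{2p}(\nu)$, reduce to the whole-line maximization using Proposition~\ref{prop:maximizers_over_R}, and then compute with Lemma~\ref{lem:moment ullman}. The only cosmetic difference is bookkeeping: you condition on the signs and invoke $|a-b|(a+b)=|a^2-b^2|$, whereas the paper uses the distributional identity $U+\widetilde U\overset{d}{=}U-\widetilde U$, and you phrase the conclusion as an equality of suprema over symmetric measures while the paper establishes the inequality directly and then exhibits the extremizer.
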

\begin{proof}
We reduce the problem on the half-line to the problem on the whole line by a trick known in the theory of orthogonal polynomials~\cite[\S 6]{rakhmanov}.
Let $V$ be a random variable with distribution $\mu\in \mathcal M_1^p(\R_+)\backslash\{\delta_0\}$ and denote by $\widetilde V$ an independent copy of $V$. Then we are interested in maximizing the expression
$$
\mathscr J_p(\mu) = \E \log |V-\widetilde V| - \frac 1p \log \E V^p
$$
over all possible choices for $V\geq 0$ with $\E V^p < \infty$ and $\Pro[V=0]<1$. Independently of $V$, consider a symmetric Rademacher random variable $\eps$ with $\Pro[\eps=1]=\Pro[\eps=-1] = 1/2$ and define $U:=\eps \sqrt V$. Then $U$ has the same distribution as $-U$, and $U^2=V$. Let also $\widetilde U$ be an independent copy of $U$. With this notation, we can write
\begin{align*}
\mathscr J_p(\mu)
&=
\E \log |V-\widetilde V| - \frac 1p \log \E V^p \\
&=
\E \log |U^2 - \widetilde U^2| - \frac 1p \log \E |U|^{2p}\\
&=
\E \log |U - \widetilde U| + \E \log |U+\widetilde U| - \frac 1p \log \E |U|^{2p}\\
&=
2 \left(\E \log |U - \widetilde U| - \frac 1{2p} \log \E |U|^{2p}\right)\\
&=
2\mathscr J_{2p}(\mathcal L_U),
\end{align*}
where in the penultimate line we used that $U+ \widetilde U$ has the same distribution as $U-\widetilde U$, and where $\mathcal L_U$ is the probability distribution of $U$. Since $\mathcal L_U \in  \mathcal M_1^{2p} (\R)\backslash \{\delta_0\}$, Proposition~\ref{prop:maximizers_over_R} with $p$ replaced by $2p$ yields that \begin{align*}
\mathscr J_p(\mu)
=
2\mathscr J_{2p}(\mathcal L_U) \leq
2\int_{\R} \int_{\R} \log|x-y|\,h_{2p}(x) h_{2p}(y) \,\dint x\, \dint y - \frac{1}{p}\log \int_\R |x|^{2p}  h_{2p} (x) \,\dint x.
\end{align*}
Moreover, if $U$ would have Lebesgue density $h_{2p}$, then the previous inequality would turn into an equality. The right-hand side above can be computed explicitly using Lemma \ref{lem:moment ullman}. In fact,
\begin{align*}
\mathscr J_p(\mu) \leq
-2\log 2 - \frac 1 {2p} - \frac 1 {p} \log \frac{\Gamma(p+\frac 12)}{2
\sqrt{\pi}\,\Gamma(p+1)}
=-2\log 2  + \frac 1p \log \left(\frac{2\sqrt \pi\, \Gamma(p+1)}{\sqrt \eee\,\Gamma(p+\frac 12)}\right)
\end{align*}
with equality if $\mu$ is the distribution of $\sqrt  {|U|}$, where $U\sim\mathscr{U}(2p)$ is Ullman distributed with parameter $2p$.
\end{proof}

\subsection{Proof of Theorem~\ref{theo:main_Delta_p}}\label{subsec:Proof}
By combining Proposition \ref{prop:cont_sup} with Proposition \ref{prop:maximizer_is_ullman} we obtain
$$
\log \Delta(p) = \lim_{n\to\infty} \log \Delta_n(p) = \sup_{\mu \in \mathcal M_1^p(\R_+)\backslash\{\delta_0\}}
\mathscr J_p(\mu)
=
-2\log 2  + \frac 1p \log \left(\frac{2\sqrt \pi\, \Gamma(p+1)}{\sqrt \eee\,\Gamma(p+\frac 12)}\right).
$$
By exponentiating, we arrive at the required formula for $\Delta(p)$. \hfill $\Box$.

\section{Proof of Theorem~\ref{thm:VolumeRatio}}\label{subsec:Proof2}

Let us recall some definitions and provide some additional preliminaries. Let $X$ be a real $N$-dimensional Banach space with unit ball $B_X$. If we are given a complex Banach space, we ignore the complex structure and consider the space as a real one, so that $N$ is the dimension over $\R$. We denote by $\mathcal{E}_X$ the (unique) maximal volume ellipsoid that is contained in $B_X$. The volume ratio of $X$ is then defined as
\begin{equation}\label{eq:DefVolumeRatio}
{\rm vr}(X) = \left(\frac{\text{Vol}_N(B_X)}{\text{Vol}_N(\mathcal{E}_X)}\right)^{1/N},
\end{equation}
where $\text{Vol}_N(\,\cdot\,)$ stands for the usual $N$-dimensional Lebesgue measure. Note that if $K$ is an $N$-dimensional symmetric convex body, $K$ is the unit ball of an $N$-dimensional Banach space $X_K$ and ${\rm vr}(X_K)$ coincides with the definition of the volume ratio presented in the introduction. Let us recall from \cite[Section 16]{TJ1988} that a Banach space $X$ is said to have \textit{enough symmetries} if the only operators that commute with every isometry of $X$ are multiples of the identity. If $X$ is $N$-dimensional and has enough symmetries, it is known that $\mathcal{E}_X$ is a suitable multiple of the Euclidean unit ball of the same dimension. More precisely,
\begin{equation}\label{eq:JohnEllipsoid}
\mathcal{E}_X = \big\|{\rm id}:\ell_2^N\to X\big\|^{-1}\mathbb{B}_2^N,
\end{equation}
where $\ell_2^N$ is the $N$-dimensional Euclidean space with the Euclidean unit ball $\mathbb{B}_2^N$ and ${\rm id}:\ell_2^N\to X$ stands for the identity operator from $\ell_2^N$ to $X$ with the standard operator norm $\|{\rm id}:\ell_2^N\to X\|$. We also recall from \cite{DefantMichels} that the Schatten classes $\mathcal{S}_p^n(\F)$, where $\F\in\{\R,\C\}$, is in fact a Banach space with enough symmetries. In what follows, for $\F\in\{\R,\C\}$ we denote by $\Mat_n(\F)$ the set of all $n\times n$ matrices with entries from $\F$.

\begin{proof}[Proof of Theorem \ref{thm:VolumeRatio}]
According to what has been said above, we need to compute the operator norm
$$
\|{\rm id}:\mathcal{S}_2^n(\F)\to\mathcal{S}_p^n(\F)\|,
$$
where we used the fact that the Schatten $2$-ball $B_2^n(\F)$ is just the Euclidean unit ball of the appropriate dimension (namely $n^2$ if $\F=\R$ and $2n^2$ if $\F=\C$). We first observe that
\begin{align*}
\|{\rm id}:\mathcal{S}_2^n(\F)\to\mathcal{S}_p^n(\F)\| &
= \sup_{\|A\|_{\mathcal S_2^n}\leq 1} \|A\|_{\mathcal S_p^n}
\leq\begin{cases}
n^{{1\over p}-{1\over 2}} &: 1\leq p< 2\\
1 &: 2\leq p\leq\infty,
\end{cases}
\end{align*}
since for $1\leq p<2$,
$$
\|A\|_{\mathcal S_p^n} = \left(\sum_{i=1}^n s_i(A)^p \right)^{1/p} \leq n^{\frac 1p - \frac 12} \left(\sum_{i=1}^n s_i(A)^2 \right)^{1/2}  = n^{\frac 1p - \frac 12} \|A\|_{\mathcal S_2^n}
$$
by the inequality between the generalized means.
On the other hand, let $A_1\in\Mat_n(\F)$ be $n^{-1/2}$ times the $n\times n$ identity matrix, which has singular values $s_1(A_1),\ldots,s_n(A_1)$ all equal to $n^{-1/2}$. This shows that, if $1\leq p<2$,
$$
\|{\rm id}:\mathcal{S}_2^n(\F)\to\mathcal{S}_p^n(\F)\| \geq \left(\sum_{j=1}^nn^{-p/2}\right)^{1/p} = n^{{1\over p}-{1\over 2}}.
$$
Also, if $2\leq p\leq\infty$ we take $A_2=(a_{ij})\in\Mat_n(\F)$ to be the $n\times n$ with all entries equal to $0$, except for setting $a_{11}=1$. In this case $s_1(A_2)=1$ and $s_2(A_2)=\ldots=s_n(A_2)=0$ and so
$$
\|{\rm id}:\mathcal{S}_2^n(\F)\to\mathcal{S}_p^n(\F)\| \geq 1.
$$
Let $\beta=1$ if $\F=\R$ and $\beta=2$ if $\F=\C$. Then, taking together the upper and lower bound and plugging this into \eqref{eq:DefVolumeRatio} and \eqref{eq:JohnEllipsoid}, we conclude from \eqref{eq:RaymondReal} and \eqref{eq:RaymondComplex} that
\begin{align*}
{\rm vr}(B_p^n(\F)) &= \begin{cases}
n^{{1\over p}-{1\over 2}}\left({\vol(B_p^n(\F))\over\vol(B_2^n(\F))}\right)^{1\over \beta n^2} &: 1\leq p<2\\
\left({\vol(B_p^n(\F))\over\vol(B_2^n(\F))}\right)^{1\over \beta n^2} &: 2\leq p\leq\infty
\end{cases}\sim\begin{cases}
n^{{1\over p}-{1\over 2}}{n^{-{1\over 2}-{1\over p}}\sqrt{\Delta(p/2)}\over n^{-{1\over 2}-{1\over 2}}\sqrt{\Delta(1)}} &: 1\leq p<2\\
{n^{-{1\over 2}-{1\over p}}\sqrt{\Delta(p/2)}\over n^{-{1\over 2}-{1\over 2}}\sqrt{\Delta(1)}} &: 2\leq p\leq\infty
\end{cases}\\
&=\begin{cases}
\sqrt{\Delta(p/2)\over\Delta(1)} &: 1\leq p<2\\
n^{{1\over 2}-{1\over p}}\sqrt{\Delta(p/2)\over\Delta(1)} &: 2\leq p\leq\infty.
\end{cases}
\end{align*}
Applying now Theorem \ref{theo:main_Delta_p} and simplifying the resulting expression completes the proof.
\end{proof}

\begin{rmk}
Theorem~\ref{thm:VolumeRatio} implies that $\sup_{n\in\N} {\rm vr}(B_p^n(\F))$ is finite for $1\leq p\leq 2$. In fact, it is even possible to give an explicit upper bound on this quantity.   Indeed, since the John ellipsoid is just a rescaled Euclidean ball, its volume can be computed exactly. It remains to provide an explicit upper bound on the volume of $B_p^n(\F)$. Using the estimate in~\cite[p.~73]{R1984}, it suffices to provide an explicit upper bound on $\Delta_n(p/2)$.
To this end, one can estimate the error terms in the proof of the upper bound of Proposition~\ref{prop:cont_sup}. We refrain from providing the details.
\end{rmk}

\subsection*{Acknowledgement}

JP has been supported by a \textit{Visiting International Professor (VIP) Fellowship} from the Ruhr University Bochum. ZK and CT were supported by the DFG Scientific Network \textit{Cumulants, Concentration and Superconcentration}.

\bibliographystyle{plain}
\bibliography{raymond_bib}

\begin{thebibliography}{10}

\bibitem{AsymptoticGeometricAnalysisBookPart1}
S.~Artstein-Avidan, A.~Giannopoulos, and V.D. Milman.
\newblock {\em Asymptotic {G}eometric {A}nalysis. {P}art {I}}, volume 202 of
  {\em Mathematical Surveys and Monographs}.
\newblock American Mathematical Society, Providence, RI, 2015.

\bibitem{BallInequality}
K.~Ball.
\newblock Volume ratios and a reverse isoperimetric inequality.
\newblock {\em J. London Math. Soc. (2)}, 44(2):351--359, 1991.

\bibitem{BartheCordero-Erausquin}
F.~Barthe and D.~Cordero-Erausquin.
\newblock Invariances in variance estimates.
\newblock {\em Proc. Lond. Math. Soc. (3)}, 106(1):33--64, 2013.

\bibitem{BourgainKlartagMilman}
J.~Bourgain, B.~Klartag, and V.D. Milman.
\newblock Symmetrization and isotropic constants of convex bodies.
\newblock In {\em Geometric aspects of functional analysis}, volume 1850 of
  {\em Lecture Notes in Math.}, pages 101--115. Springer, Berlin, 2004.

\bibitem{BM1987}
J.~Bourgain and V.D. Milman.
\newblock New volume ratio properties for convex symmetric bodies in {$\mathbb
  {R}^n$}.
\newblock {\em Invent. Math.}, 88:319--340, 1987.

\bibitem{IsotropicConvexBodies}
S.~Brazitikos, A.~Giannopoulos, P.~Valettas, and B.-H. Vritsiou.
\newblock {\em Geometry of {I}sotropic {C}onvex {B}odies}, volume 196 of {\em
  Mathematical Surveys and Monographs}.
\newblock American Mathematical Society, Providence, RI, 2014.

\bibitem{DefantMichels}
A.~Defant and C.~Michels.
\newblock Norms of tensor product identities.
\newblock {\em Note Mat.}, 25(1):129--166, 2005/06.

\bibitem{GL1974}
Y.~Gordon and D.R. Lewis.
\newblock Absolutely summing operators and local unconditional structures.
\newblock {\em Acta Math.}, 133:27--48, 1974.

\bibitem{GuedonPaourisConcentration}
O.~Gu\'edon and G.~Paouris.
\newblock Concentration of mass on the {S}chatten classes.
\newblock {\em Ann. Inst. H. Poincar\'e Probab. Statist.}, 43(1):87--99, 2007.

\bibitem{HPV17}
A.~Hinrichs, J.~Prochno, and J.~Vyb\'iral.
\newblock Entropy numbers of embeddings of {S}chatten classes.
\newblock {\em J. Funct. Anal.}, 273(10):3241 -- 3261, 2017.

\bibitem{KPTMatrixBalls}
Z.~{Kabluchko}, J.~{Prochno}, and C.~{Th\"ale}.
\newblock {Intersection of unit balls in classical matrix ensembles}.
\newblock {\em Preprint}, 2018.

\bibitem{KoenigMeyerPajor}
H.~K\"onig, M.~Meyer, and A.~Pajor.
\newblock The isotropy constants of the {S}chatten classes are bounded.
\newblock {\em Math. Ann.}, 312(4):773--783, 1998.

\bibitem{koroljuk_book}
V.S. Koroljuk and Y.V. Borovskich.
\newblock {\em Theory of $U$-{S}tatistics}, volume 273 of {\em Mathematics and
  its Applications}.
\newblock Kluwer Academic Publishers, Dordrecht, The Netherlands, 1994.

\bibitem{RadkeVritsiou}
J.~{Radke} and B.-H. {Vritsiou}.
\newblock {On the thin-shell conjecture for the {S}chatten classes}.
\newblock {\em ArXiv e-prints}, February 2016.

\bibitem{rakhmanov}
E.A. Rakhmanov.
\newblock Asymptotic properties of orthogonal polynomials on the real axis.
\newblock {\em Mat. Sb. (N.S.)}, 119(161)(2):163--203, 303, 1982.

\bibitem{SaffBOOK}
E.B. Saff and V.~Totik.
\newblock {\em Logarithmic potentials with external fields}, volume 316 of {\em
  Grundlehren der Mathematischen Wissenschaften [Fundamental Principles of
  Mathematical Sciences]}.
\newblock Springer-Verlag, Berlin, 1997.
\newblock Appendix B by Thomas Bloom.

\bibitem{R1984}
J.~Saint~Raymond.
\newblock Le volume des id\'eaux d'op\'erateurs classiques.
\newblock {\em Studia Math.}, 80(1):63--75, 1984.

\bibitem{S1978}
S.~Szarek.
\newblock On {K}ashin's almost {E}uclidean orthogonal decomposition of
  {$l_n^1$}.
\newblock {\em Bull. Acad. Pol. Sci., {S\'er}. Sci. Math. Astron. Phys.},
  26(8):691--694, 1978.

\bibitem{ST1980}
S.~Szarek and N.~Tomczak-Jaegermann.
\newblock On nearly {E}uclidean decomposition for some classes of {B}anach
  spaces.
\newblock {\em Compositio Math.}, 40(3):367--385, 1980.

\bibitem{TJ1974}
N.~Tomczak-Jaegermann.
\newblock The moduli of smoothness and convexity and the {R}ademacher averages
  of trace classes {$S_{p}\,\,(1\leq p<\infty )$}.
\newblock {\em Studia Math.}, 50:163--182, 1974.

\bibitem{TJ1988}
N.~Tomczak-Jaegermann.
\newblock {\em {B}anach-{M}azur {D}istances and {F}inite-{D}imensional
  {O}perator {I}deals}, volume~38 of {\em Pitman Monographs and Surveys in Pure
  and Applied Mathematics}.
\newblock Longman, Harlow; Wiley, New York, 1989.

\end{thebibliography}

\end{document}